\newtheorem{lemma}{Lemma}
\newtheorem{thm}{Theorem}
\newtheorem{prop}{Proposition}
\newtheorem{remark}{Remark}
\numberwithin{equation}{section}
\begin{document}

\leftline{ \scriptsize \it}
\title[]
{Approximation properties of complex genuine $\alpha-$Bernstein-Durrmeyer operators}
\maketitle

\begin{center}
{\bf  Meenu Goyal}
\vskip0.2in
School of Mathematics\\
Thapar Institute of Engineering and Technology, Patiala\\
Patiala-147004, India\\
\vskip0.2in
meenu\_rani@thapar.edu
\end{center}

\begin{abstract}
Herein we propose a complex form of a genuine Bernstein-Durrmeyer type operators depending on a non-negative real parameter $\alpha.$ We present the quantitative upper bound, Voronovskaja type result and exact order of approximation for these operators and for their derivatives attached to analytic functions on compact disks. These results validate the extension of approximation properties of complex genuine $\alpha-$Bernstein-Durrmeyer type operators from real intervals to compact disks in the complex plane.\\
Keywords: Complex Bernstein-Durrmeyer-type polynomials, quantitative estimates, Voronovskaja-type result, simultaneous approximation. \\
Mathematics Subject Classification(2010): 30E10, 41A25, 41A28.
\end{abstract}

\section{Introduction}
In \cite{CTLX}, Chen et al. introduced a new generalization of Bernstein operators depending on a non-negative real parameter $\alpha$, which are having some remarkable properties, given by
\begin{eqnarray}\label{g1}
\mathcal{B}_n^\alpha (f;x)= \sum_{k=0}^{n} q_{n,k}^{\alpha}(x) f\left(\frac{k}{n}\right),
\end{eqnarray}
for any function $f(x)$ defined on $[0,1], \, n\in\mathbb{N}.$ \\
The $\alpha-$Bernstein polynomials $q_{n,k}^{\alpha}(x)$ of degree $n$ is defined by $q_{1,0}^{\alpha}(x)=1-x, \, q_{1,1}^{\alpha}(x)=x$\\ and
\begin{eqnarray*}
q_{n,k}^{\alpha}(x)= \left[{n-2 \choose k} (1-\alpha)x + {n-2 \choose k-2} (1-\alpha)(1-x)+ {n \choose k} \alpha x(1-x)\right] x^{k-1}(1-x)^{n-k-1}
\end{eqnarray*}
for $n\geq 2, \, x\in [0,1].$ $\mathcal{B}_n^\alpha(f;x)$ are positive operators for $0\leq \alpha\leq 1.$\\ These operators have many useful properties such as preservation of  linear (or constant) functions, monotonicity and convexity.
To approximate the Lebesgue integrable functions by $\alpha-$Bernstein operators, Acar et. al \cite{AMM} introduced a genuine $\alpha-$Bernstein-Durrmeyer operators as follows:
\begin{eqnarray}\label{m1}
G_n^\alpha (f;x)= q_{n,0}^{\alpha}(x) f(0)+q_{n,n}^{\alpha}(x) f(1)+(n-1) \sum_{k=1}^{n-1} q_{n,k}^{\alpha}(x) \int_0^1 p_{n-2,k-1}(t) f(t) dt,
\end{eqnarray}
where $p_{n,k}(t)= {n\choose k} t^k (1-t)^{n-k}.$
Authors studied local approximation, error estimation in terms of Ditzian-Totik modulus of smoothness and the convergence of these operators to certain functions by illustrative graphics.

Very recently, \c{C}etin \cite{NC} introduced the complex form of the operators defined in $(\ref{g1}).$ These operators exhibit the approximation properties i.e. upper estimate, asymptotic formula, exact order of convergence and some shape preserving properties in the complex domain. These properties for complex Bernstein polynomials in compact disks were initially investigated by Lorentz \cite{GGL}. Also, in \cite{SG}, a very useful book by Gal and references therein, the overconvergence properties of the well known complex operators i.e. Bernstein-Stancu operators, $q-$Bernstein polynomials, Kantorovich operators etc. have been discussed. He obtained quantitative estimates for these operators. He put in evidence the overconvergence phenomenon for several operators, namely the extensions of approximation properties with exact quantitative estimates, from the real interval to compact disks in the complex plane. Several type of complex operators were studied by a number of authors (see \cite{AG1}-\cite{VG2}, \cite{GG}- \cite{NIM1}).

Motivated from the real case and overconvergence properties of above work on complex operators, we consider the complex form of the operators $(\ref{m1}),$ for $n\in \mathbb{N}, \, \alpha\in [0,1],\, z\in \mathbb{C}$ and $f$ is complex valued analytic function in  an open disk $D_R=\{z\in \mathbb{C}, |z|<R\},\ R>1,$ as follows:
\begin{eqnarray}\label{n1}
G_n^\alpha (f;z)= q_{n,0}^{\alpha}(z) f(0)+q_{n,n}^{\alpha}(z) f(1)+(n-1) \sum_{k=1}^{n-1} q_{n,k}^{\alpha}(z) \int_0^1 p_{n-2,k-1}(t) f(t) dt.
\end{eqnarray}
The aim of this paper is to present the approximation properties of complex genuine $\alpha-$Bernstein-Durrmeyer operators. Here an upper estimate, Voronovskaja type result, exact order and simultaneous approximation are obtained for the order of approximation of these operators attached to analytic functions on a certain compact disk which give significant contribution in the theory of uniform convergence without depending on the parameter $\alpha.$

\section{auxiliary results}
Let $e_p(z)=z^p, p\in \mathbb{N}\cup \{0\}, z\in \mathbb{C}.$ From the definition of our operators, it can be easily shown that $\displaystyle G_n^\alpha (e_0;z)=1, \, G_n^\alpha (e_1;z)=z.$\\
We first mention the recurrence relation for moments:
\begin{prop}\label{p1}
For all $p\in \mathbb{N}\cup \{0\}, n \in \mathbb{N},\,z\in \mathbb{C}$ and $\alpha\in [0,1],$ we have
\begin{eqnarray*}
G_n^\alpha (e_{p+1};z)= \frac{z(1-z)}{n+p} (G_n^\alpha (e_p;z))^\prime +\frac{nz+p}{n+p}G_n^\alpha (e_p;z) -\frac{z}{n+p} S_1(e_p;z) +\frac{1-z}{n+p} S_2(e_p;z),
\end{eqnarray*}
where $\displaystyle S_1(e_p;z)=(n-1)\sum_{k=0}^{n-1} p_{n-2,k}(z) (1-\alpha) (1-z) \int_0^1 p_{n-2,k-1}(t) t^p dt$ \\and $\displaystyle S_2(e_p;z)= (n-1)\sum_{k=2}^{n} p_{n-2,k-2}(z) (1-\alpha)z \int_0^1 p_{n-2,k-1}(t) t^p dt.$
\end{prop}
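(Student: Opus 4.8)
The plan is to reduce the computation of $G_n^\alpha(e_{p+1};z)$ to that of $G_n^\alpha(e_p;z)$ through two independent recurrences: one in the integration variable $t$ and one in the spatial variable $z$. First I would record that, since $e_{p+1}(0)=0$ for $p\ge 0$, the term $q_{n,0}^\alpha(z)f(0)$ drops out, so that $G_n^\alpha(e_{p+1};z)=q_{n,n}^\alpha(z)+(n-1)\sum_{k=1}^{n-1}q_{n,k}^\alpha(z)A_{n,k}^{p+1}$, where I abbreviate $A_{n,k}^{p}:=\int_0^1 p_{n-2,k-1}(t)t^p\,dt$.

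The first key step is an integral recurrence for $A_{n,k}^p$. Starting from the elementary differential identity $t(1-t)p_{n-2,k-1}'(t)=\bigl((k-1)-(n-2)t\bigr)p_{n-2,k-1}(t)$, I would multiply by $t^p$, integrate over $[0,1]$, and integrate by parts; the boundary contribution vanishes because the weight $t^{p+1}(1-t)$ is zero at both endpoints. Collecting terms yields $(n+p)A_{n,k}^{p+1}=(k+p)A_{n,k}^p$. Substituting this into the sum and splitting $k+p$ into its two summands peels off the factor $\tfrac{p}{n+p}$, which reassembles $\tfrac{p}{n+p}G_n^\alpha(e_p;z)$, and leaves a residual sum weighted by $k$.

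The second key step handles this $k$-weighted sum by a differential identity for the basis functions in $z$. Writing $q_{n,k}^\alpha(z)=(1-\alpha)(1-z)p_{n-2,k}(z)+(1-\alpha)z\,p_{n-2,k-2}(z)+\alpha\,p_{n,k}(z)$ and applying $z(1-z)p_{m,j}'(z)=(j-mz)p_{m,j}(z)$ term by term, I expect to obtain
\[
k\,q_{n,k}^\alpha(z)=z(1-z)\bigl(q_{n,k}^\alpha\bigr)'(z)+nz\,q_{n,k}^\alpha(z)-(1-\alpha)z(1-z)p_{n-2,k}(z)+(1-\alpha)z(1-z)p_{n-2,k-2}(z).
\]
Multiplying by $A_{n,k}^p$ and summing over $k$, the derivative sum rebuilds $\tfrac{z(1-z)}{n+p}(G_n^\alpha(e_p;z))'$, the $nz$-term supplies the remaining part of $\tfrac{nz+p}{n+p}G_n^\alpha(e_p;z)$, and the last two sums reproduce $-\tfrac{z}{n+p}S_1(e_p;z)$ and $+\tfrac{1-z}{n+p}S_2(e_p;z)$ once one notes that the spurious boundary indices ($k=0$ in the $p_{n-2,k}$ sum and $k=n$ in the $p_{n-2,k-2}$ sum) contribute zero.

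The delicate point, and the step I would check most carefully, is the bookkeeping of the endpoint term $q_{n,n}^\alpha(z)$ arising from $f(1)$: it must be carried along both through the splitting of $k+p$ and through the differentiation that produces $(G_n^\alpha(e_p;z))'$. Using $q_{n,n}^\alpha(z)=(1-\alpha)z^{n-1}+\alpha z^n$, one has to verify precisely how the residual multiples of $q_{n,n}^\alpha$ and $(q_{n,n}^\alpha)'$ recombine. This endpoint accounting is where the identity either closes exactly or picks up a lower-order correction, so it is the main obstacle; everything else is routine manipulation of the two recurrences above.
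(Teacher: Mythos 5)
Your plan is, in essence, the paper's own proof run in the opposite direction, so the approach is sound. The paper uses exactly your two ingredients: the Beta-function recurrence (your integration by parts gives $(k+p)B(k+p,n-k)=(n+p)B(k+p+1,n-k)$, which the paper instead quotes as a Beta identity) and the differentiation identity $z(1-z)p_{m,j}'(z)=(j-mz)p_{m,j}(z)$. The only organizational difference is that the paper splits $G_n^\alpha(e_p;z)$ into three sums $S_1+S_2+S_3$ matching the three terms of $q_{n,k}^\alpha$ (absorbing $q_{n,n}^\alpha$ into the $k=n$ terms of $S_2,S_3$), differentiates each in $z$ first and then climbs from $p$ to $p+1$, obtaining one differential recurrence per sum; you descend from $p+1$ to $p$ inside the integral first and keep $q_{n,k}^\alpha$ whole. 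Your combined identity for $k\,q_{n,k}^\alpha$ is correct: it is precisely the sum of the paper's three per-component differentiation identities.

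The endpoint accounting you flagged does close exactly, and since you asked where, here it is. For $p\ge1$, carrying $q_{n,n}^\alpha(z)=(1-\alpha)z^{n-1}+\alpha z^n$ through your two steps leaves the residual
\[
R=\frac{1}{n+p}\Bigl[n(1-z)\,q_{n,n}^\alpha(z)-z(1-z)\bigl(q_{n,n}^\alpha\bigr)'(z)\Bigr]
=\frac{(1-z)(1-\alpha)z^{n-1}}{n+p},
\]
because $n\,q_{n,n}^\alpha(z)-z\bigl(q_{n,n}^\alpha\bigr)'(z)=(1-\alpha)z^{n-1}$ (the $\alpha z^n$ part cancels identically). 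This $R$ is exactly $\frac{1-z}{n+p}$ times the $k=n$ term of $S_2$, \emph{provided} that term is read through the Beta form $\binom{n-2}{k-1}B(k+p,n-k)=\frac{(n-2)!\,(k+p-1)!}{(k-1)!\,(n+p-1)!}$, which at $k=n$ equals $\frac{1}{n-1}$. Read literally, $\int_0^1 p_{n-2,n-1}(t)t^p\,dt=0$ since $\binom{n-2}{n-1}=0$, and your computation then shows the stated formula would be off by exactly $R$; so the proposition is true only under this tacit convention, which is the same one the paper invokes when it writes $G_n^\alpha(e_p;z)=S_1+S_2+S_3$ with the sums extended to $k=n$. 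One further small oversight: your remark that the $f(0)$ term drops applies to $G_n^\alpha(e_{p+1};z)$, but on the right-hand side $G_n^\alpha(e_p;z)$ still contains $q_{n,0}^\alpha(z)$ when $p=0$. Either treat $p=0$ separately, as the paper does (using $S_1(e_0;z)=(1-\alpha)(1-z)$, $S_2(e_0;z)=(1-\alpha)z$), or carry $q_{n,0}^\alpha$ along as well; its residual $-\frac{z}{n}\bigl[n\,q_{n,0}^\alpha(z)+(1-z)\bigl(q_{n,0}^\alpha\bigr)'(z)\bigr]=-\frac{z(1-\alpha)(1-z)^{n-1}}{n}$ is exactly $-\frac{z}{n}$ times the $k=0$ term of $S_1$ under the same convention, so the identity again closes with no leftover.
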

\begin{proof}
If $p=0,$ then the recurrence relation is immediate from $G_n^\alpha (e_0;z)=1,\,\,G_n^\alpha (e_1;z)=e_1(z),\,S_1(e_0;z)= (1-z)(1-\alpha)$ and $S_2(e_0;z)= z(1-\alpha).$ So, let us suppose that $p\geq 1.$\\
Denote $\displaystyle I= \int_0^1 p_{n-2,k-1}(t) t^p dt = {n-2 \choose k-1} B(k+p,n-k),$ from the formula of definition, we can write
\begin{eqnarray*}
G_n^\alpha (e_p;z)&=& q_{n,n}^\alpha(z)+(n-1) \sum_{k=1}^{n-1} q_{n,k}^\alpha(z) * I\\
&=& (n-1)\sum_{k=1}^{n-1} (1-\alpha)p_{n-2,k}(z)(1-z) * I + (n-1)\sum_{k=1}^{n} (1-\alpha)p_{n-2,k-2}(z) z * I\\&&+ (n-1)\sum_{k=1}^{n} \alpha\, p_{n,k}(z) * I:= S_1(e_p;z)+S_2(e_p;z)+S_3(e_p;z)\,(say)\\
\frac{d}{dz} (S_1(e_p;z)) &=& (n-1)\sum_{k=1}^{n-1} (1-\alpha)\left(p_{n-2,k}(z)(1-z)\right)^\prime * I \\
&=& \frac{(n-1) (1-\alpha)}{z(1-z)} \sum_{k=1}^{n-1} p_{n-2,k}(z)(1-z) (k-(n-1)z) * I\\
&=& -\frac{n-1}{1-z} S_1(e_p;z) +\frac{(1-\alpha)(n-1)}{z(1-z)} \sum_{k=1}^{n-1}  p_{n-2,k}(z)(1-z)\, k*I.
\end{eqnarray*}
Now, from the formula for beta function $p\, B(p,q)= (p+q)\, B(p+1,q) \,\,\forall \,\, p,q>0,$\\
we have $(k+p)\, B(k+p,n-k)= (n+p)\, B(k+p+1,n-k)\,$\\
$\Rightarrow k \,B(k+p,n-k)= (n+p)\, B(k+p+1,n-k)-p\, B(k+p,n-k).$\\
Therefore, we obtain
\begin{eqnarray}\label{m2}
(S_1(e_p;z))^\prime &=& -\frac{n-1}{1-z} S_1(e_p;z) +\frac{(1-\alpha)(n-1)}{z(1-z)} \sum_{k=1}^{n-1}  p_{n-2,k}(z)(1-z) {n-2 \choose k-1}\nonumber\\
&&\times\left((n+p)\, B(k+p+1,n-k)-p\, B(k+p,n-k)\right)\nonumber\\
z(1-z) (S_1(e_p;z))^\prime &=& (n+p)S_1(e_{p+1};z)-((n-1)z+p)S_1(e_p;z).
\end{eqnarray}
Similarly,
\begin{eqnarray}\label{m3}
z(1-z) (S_2(e_p;z))^\prime =(n+p)S_2(e_{p+1};z)-((n-1)z+p+1)S_2(e_p;z)
\end{eqnarray}
and
\begin{eqnarray}\label{m4}
z(1-z) (S_3(e_p;z))^\prime = (n+p)S_3(e_{p+1};z)-(nz+p)S_3(e_p;z).
\end{eqnarray}
By combining equations $(\ref{m2})-(\ref{m4}),$ we get the required result.
\end{proof}

\begin{remark}
By using proposition $\ref{p1},$ we get
\begin{eqnarray*}
G_n^\alpha (e_2;z)=z^2+\frac{2 z(1-z)}{n+1}\left(1+\frac{1-\alpha}{n}\right).
\end{eqnarray*}
\end{remark}

\begin{lemma} \label{l2}
(i) For $n\in \mathbb{N},\,p\in \mathbb{N}\cup \{0\}$ and $\alpha \in [0,1],$ we have $G_n^\alpha (e_p;1)=1.$\\
(ii) For $n,p\in \mathbb{N},\,z\in \mathbb{C}$ and $\alpha \in [0,1],$ we have
\begin{eqnarray*}
G_n^\alpha (e_p;z)&=& \frac{(n-1)!}{(n-1+p)!} \left((1-\alpha) \sum_{s=0}^{n-1}{n-1 \choose s} z^s \bigtriangleup_1^s F_p(0)+\alpha \sum_{s=0}^{n}{n\choose s} z^s \bigtriangleup_1^s E_p(0)\right)\\&=& \frac{(n-1)!}{(n-1+p)!} \left((1-\alpha) \sum_{s=0}^{\min\{n-1,p\}}{n-1 \choose s} z^s \bigtriangleup_1^s F_p(0)+\alpha \sum_{s=0}^{\min\{n,p\}}{n\choose s} z^s \bigtriangleup_1^s E_p(0)\right),
\end{eqnarray*}
where $\displaystyle F_p(k)= \left(1-\frac{k}{n-1}\right)E_p(k)+\frac{k}{n-1}E_p(k+1),\, E_p(k)= \prod_{j=0}^{p-1}(k+j)\,\, \forall \, k\geq 0$ and $\displaystyle\bigtriangleup_1^s F_p(0)= \sum_{k=0}^s (-1)^k {s \choose k} F_p(s-k), \, \bigtriangleup_1^s E_p(0)= \sum_{k=0}^s (-1)^k {s \choose k} E_p(s-k)$ and $\displaystyle\bigtriangleup_1^s F_p(0), \bigtriangleup_1^s E_p(0)\geq 0$ for all $s$ and $p.$
\end{lemma}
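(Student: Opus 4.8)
The plan is to collapse $G_n^\alpha(e_p;z)$ into a single ordinary Bernstein-type sum and then to invoke the classical Newton forward-difference identity
$$\sum_{k=0}^{N} p_{N,k}(z)\, g(k) = \sum_{s=0}^{N}\binom{N}{s} z^s\, \bigtriangleup_1^s g(0),$$
valid for any function $g$ on $\{0,1,\dots,N\}$ (it follows by writing $g(k)=\sum_{s}\binom{k}{s}\bigtriangleup_1^s g(0)$ and using $\sum_k \binom{N}{k}\binom{k}{s} z^k(1-z)^{N-k} = \binom{N}{s} z^s$). First I would evaluate the moment integral: since $\int_0^1 p_{n-2,k-1}(t) t^p\,dt = \binom{n-2}{k-1} B(k+p,n-k)$, a short factorial computation gives $(n-1)\binom{n-2}{k-1}B(k+p,n-k) = \frac{(n-1)!}{(n-1+p)!}E_p(k)$, where $E_p(k)=\tfrac{(k+p-1)!}{(k-1)!}$. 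Because $E_p(0)=0$ for $p\ge 1$ (so the $f(0)$ term may be included for free) and $\frac{(n-1)!}{(n-1+p)!}E_p(n)=1$ (which absorbs the $f(1)$ term), this reduces the operator to
$$G_n^\alpha(e_p;z) = \frac{(n-1)!}{(n-1+p)!}\sum_{k=0}^{n} q_{n,k}^\alpha(z)\, E_p(k).$$

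Next I would split $q_{n,k}^\alpha$ into its two building blocks,
$$q_{n,k}^\alpha(z) = (1-\alpha)\big[p_{n-2,k}(z)(1-z)+p_{n-2,k-2}(z)z\big] + \alpha\, p_{n,k}(z).$$
The $\alpha$-part is already a genuine degree-$n$ Bernstein sum, so the identity above with $N=n$, $g=E_p$ delivers $\alpha\sum_{s=0}^{n}\binom{n}{s}z^s \bigtriangleup_1^s E_p(0)$ immediately. The hard part is the $(1-\alpha)$-part, which lives on the degree-$(n-2)$ basis and carries the shifted weights $E_p(k)$ and $E_p(k+2)$. Here I would apply the degree-elevation relations $p_{n-2,k}(z)(1-z) = \frac{n-1-k}{n-1}p_{n-1,k}(z)$ and $p_{n-2,k}(z)z = \frac{k+1}{n-1}p_{n-1,k+1}(z)$, then shift the index in the second sum and recombine. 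The coefficient of $p_{n-1,k}(z)$ collapses to exactly $F_p(k)=(1-\tfrac{k}{n-1})E_p(k)+\tfrac{k}{n-1}E_p(k+1)$, so the $(1-\alpha)$-part equals $(1-\alpha)\sum_{k=0}^{n-1}p_{n-1,k}(z)F_p(k)$, and the identity with $N=n-1$, $g=F_p$ finishes it. Adding the two contributions and restoring the prefactor yields the first displayed formula; this recombination is the step I expect to be the main obstacle, since it is where the endpoint terms, the index shifts, and the definition of $F_p$ all have to match up.

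For the truncated representation, I would note that $E_p$ is a polynomial of degree $p$ in $k$ and $F_p$ has degree $\le p$, so $\bigtriangleup_1^s E_p(0)=\bigtriangleup_1^s F_p(0)=0$ for $s>p$, cutting the two sums off at $\min\{n,p\}$ and $\min\{n-1,p\}$. For nonnegativity I would write $E_p(k)=p!\binom{k+p-1}{p}$ and iterate $\bigtriangleup_1\binom{k+p-1}{p}=\binom{k+p-1}{p-1}$ to obtain $\bigtriangleup_1^s E_p(0)=p!\binom{p-1}{s-1}\ge 0$; for $F_p$ the identity $\frac{k}{n-1}[E_p(k+1)-E_p(k)]=\frac{p}{n-1}E_p(k)$ shows $F_p=\frac{n-1+p}{n-1}E_p$, whence $\bigtriangleup_1^s F_p(0)\ge 0$ as well.

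Finally, part (i) is then immediate. For $p=0$ it is just $G_n^\alpha(e_0;z)=1$; for $p\ge 1$ I would set $z=1$ in the established formula and use Newton's formula $\sum_{s=0}^{N}\binom{N}{s}\bigtriangleup_1^s g(0)=g(N)$ together with $F_p(n-1)=E_p(n)=\frac{(n-1+p)!}{(n-1)!}$, giving $G_n^\alpha(e_p;1)=(1-\alpha)+\alpha=1$. Alternatively one can argue directly from the factor $(1-z)$ in each $q_{n,k}^\alpha$ for $k<n$, which makes $q_{n,k}^\alpha(1)=0$ while $q_{n,n}^\alpha(1)=1$, so only the $f(1)$ term survives.
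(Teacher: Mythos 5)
Your proposal is correct and follows essentially the same route as the paper's proof: the same evaluation of the moment integral giving $\frac{(n-1)!}{(n-1+p)!}E_p(k)$, the same absorption of the endpoint terms into $\sum_{k=0}^{n} q_{n,k}^{\alpha}(z)E_p(k)$, the same split of $q_{n,k}^{\alpha}$ into its $(1-\alpha)$ and $\alpha$ parts with recombination into $\sum_{k}p_{n-1,k}(z)F_p(k)$, and the same Newton forward-difference expansion (which the paper derives inline by expanding $(1-z)^{n-1-k}$ binomially and reindexing, whereas you cite it as a classical identity). Your only departures are refinements of detail: the closed form $\bigtriangleup_1^s E_p(0)=p!\binom{p-1}{s-1}$ and the identity $F_p=\frac{n-1+p}{n-1}E_p$ give a cleaner, fully explicit justification of the nonnegativity and truncation claims than the paper's brief appeal to the nonnegativity of the derivatives of $E_p$.
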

\begin{proof} (i) From the definition of the operators $(\ref{n1})$
\begin{eqnarray*}
G_n^\alpha (e_p;z)&=& q_{n,n}^{(\alpha)}(z)+(n-1) \sum_{k=1}^{n-1} q_{n,k}^{(\alpha)}(z) * I\\
G_n^\alpha (e_p;1)&=& q_{n,n}^{(\alpha)}(1)+(n-1) \sum_{k=1}^{n-1} q_{n,k}^{(\alpha)}(1) * I=1.
\end{eqnarray*}

(ii) Let us denote $\prod_{j=0}^{p-1}(\mu+j):=E_p(\mu).$ So, $I$ can be written as
\begin{eqnarray*}
I= {n-2 \choose k-1} B(k+p, n-k)&=& \frac{(n-2)!}{(k-1)!(n-k-1)!} * \frac{(k+p-1)!(n-k-1)!}{(n-1+p)!}\\
&=& \frac{(n-2)!}{(n-1+p)!}(k(k+1)(k+2)\dotsb (k+p-1)):=\frac{(n-2)!}{(n-1+p)!}E_p(k),
\end{eqnarray*}
It is clear that $E_p(\mu)$ and its derivative of any order are $\geq 0\,\, \forall\,\, \mu\geq 0,$ i.e. $\Rightarrow \triangle_1^k E_p(0)\geq 0\,\, \forall\,\, k, p.$

So, we can write
\begin{eqnarray*}
G_n^\alpha (e_p;z)&=& q_{n,n}^\alpha(z)+\frac{(n-1)!}{(n+p-1)!} \sum_{k=1}^{n-1} q_{n,k}^{(\alpha)}(z) * E_p(k)\\
&=& q_{n,n}^\alpha(z)+\frac{(n-1)!}{(n+p-1)!}\left( \sum_{k=0}^{n} q_{n,k}^{(\alpha)}(z) * E_p(k)-q_{n,0}^{(\alpha)}(z) E_p(0)-q_{n,n}^{(\alpha)}(z)E_p(n)\right)\\
&=& q_{n,n}^\alpha(z)-\frac{(n-1)!}{(n+p-1)!}q_{n,n}^\alpha(z) E_p(n)+\frac{(n-1)!}{(n+p-1)!}\sum_{k=0}^{n} q_{n,k}^{(\alpha)}(z) * E_p(k)\\
&=& \frac{(n-1)!}{(n+p-1)!}\sum_{k=0}^{n} q_{n,k}^{(\alpha)}(z) * E_p(k)= \frac{(n-1)!}{(n+p-1)!}\left((1-\alpha)\sum_{k=0}^{n-1} p_{n-2,k}(z)(1-z) * E_p(k)\right.\\&&\left.+(1-\alpha)\sum_{k=1}^{n-1} p_{n-2,k-1}(z)z * E_p(k+1)+\alpha \sum_{k=0}^{n} p_{n,k}(z) * E_p(k) \right)\\
&=& \frac{(n-1)!}{(n+p-1)!}\left[(1-\alpha)\sum_{k=0}^{n-1}{n-1 \choose k}\left\{ \left(1-\frac{k}{n-1}\right)E_p(k)+\frac{k}{n-1}E_p(k+1)\right\}z^k(1-z)^{n-k-1}\right.\\&&\left.+\alpha\sum_{k=0}^{n} p_{n,k}(z) * E_p(k) \right]\\
&=& \frac{(n-1)!}{(n+p-1)!}\left[(1-\alpha)\sum_{k=0}^{n-1}p_{n-1,k}(z) * F_p(k)+\alpha\sum_{k=0}^{n} p_{n,k}(z) * E_p(k)\right],
\end{eqnarray*}
where $\displaystyle F_p(k)= \left(1-\frac{k}{n-1}\right)E_p(k)+ \frac{k}{n-1}E_p(k+1).$
\begin{eqnarray*}
G_n^\alpha (e_p;z)&=& \frac{(n-1)!}{(n+p-1)!}\left((1-\alpha)\sum_{k=0}^{n-1}{n-1\choose k} z^k F_p(k)\sum_{j=0}^{n-1-k} (-1)^j{n-1-k \choose j} z^j\right.\\
&&\left.+\alpha\sum_{k=0}^{n} {n \choose k} z^k *E_p(k)\sum_{j=0}^{n-k} (-1)^j {n-k \choose j} z^j\right)\\
\end{eqnarray*}
Assume $j=s-k$ in both the summations of above equation, then we get
\begin{eqnarray*}
&=& \frac{(n-1)!}{(n+p-1)!}\left((1-\alpha)\sum_{s=0}^{n-1}{n-1\choose s} z^s \sum_{k=0}^{s} (-1)^{s-k}{s \choose k}F_p(k)\right.\\
&&\left.+\alpha\sum_{s=0}^{n} {n \choose s} z^s \sum_{k=0}^{s} (-1)^{s-k} {s \choose k} E_p(k)\right)\\
&=&\frac{(n-1)!}{(n+p-1)!}\left((1-\alpha)\sum_{s=0}^{n-1}{n-1\choose s} z^s \sum_{k=0}^{s} (-1)^{k}{s \choose k}F_p(s-k)\right.\\
&&\left.+\alpha\sum_{s=0}^{n} {n \choose s}z^s \sum_{k=0}^{s} (-1)^{k} {s \choose k} E_p(s-k)\right)\\
&=& \frac{(n-1)!}{(n+p-1)!}\left((1-\alpha)\sum_{s=0}^{\min\{n-1,p\}}{n-1\choose s} z^s \triangle_1^s F_p(0)+\alpha\sum_{s=0}^{\min\{n,p\}} {n \choose s}\sum_{q=0}^{s} \triangle_1^s E_p(0)\right).
\end{eqnarray*}
This completes the proof.
\end{proof}

\section{Main results}
The first main result of this section is the following upper estimate:
\subsection{Upper estimate}
\begin{thm}\label{t1}
(i) For all $p,n\in \mathbb{N}\cup \{0\},\, |z|\leq r$ and $\alpha \in [0,1],$ we have $|G_n^\alpha(e_p;z)|\leq r^p.$\\
(ii) Let $f(z)= \displaystyle\sum_{p=0}^\infty c_p z^p \, \forall\,|z|<R,\, 1\leq r<R.$ For all $|z|\leq r,\,n\in \mathbb{N}$ and $\alpha \in [0,1],$ we have
\begin{eqnarray*}
|G_n^\alpha (f;z)-f(z)|\leq \frac{C_r(f)}{n}, \,\,\mbox{where}\,\, C_r(f)= 2 \sum_{p=2}^\infty |c_p| p(p-1)r^p<\infty.
\end{eqnarray*}
\end{thm}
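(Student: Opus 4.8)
For part (i), the plan is to read everything off the explicit representation in Lemma \ref{l2}(ii). That formula exhibits $G_n^\alpha(e_p;z)$ as a polynomial in $z$ of degree at most $\min\{n,p\}\le p$, and since the prefactor $\frac{(n-1)!}{(n-1+p)!}$, the binomial coefficients, the weights $1-\alpha$ and $\alpha$, and the finite differences $\triangle_1^s F_p(0)$, $\triangle_1^s E_p(0)$ are all nonnegative, this polynomial has nonnegative coefficients. Hence $|G_n^\alpha(e_p;z)|\le G_n^\alpha(e_p;|z|)\le G_n^\alpha(e_p;r)$ for $|z|\le r$. Because the coefficients are nonnegative and, by Lemma \ref{l2}(i), sum to $G_n^\alpha(e_p;1)=1$, and because $r^s\le r^p$ for $0\le s\le p$ when $r\ge 1$, I would conclude $G_n^\alpha(e_p;r)=\sum_s a_s r^s\le r^p\sum_s a_s=r^p\,G_n^\alpha(e_p;1)=r^p$, which is (i).

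For part (ii), since $f$ is analytic on $D_R$ and $1\le r<R$, its Taylor series converges uniformly on $[0,1]$ and on $|z|\le r$; by linearity of $G_n^\alpha$ and termwise integration I first write $G_n^\alpha(f;z)=\sum_{p=0}^\infty c_p G_n^\alpha(e_p;z)$, the series being dominated on $|z|\le r$ by $\sum_p|c_p|r^p<\infty$ thanks to (i). As $G_n^\alpha(e_0;z)=1$ and $G_n^\alpha(e_1;z)=z$, the first two terms cancel against $f$, leaving $G_n^\alpha(f;z)-f(z)=\sum_{p=2}^\infty c_p\big(G_n^\alpha(e_p;z)-z^p\big)$. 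Everything then reduces to the single-monomial estimate
\[
\|G_n^\alpha(e_p;\cdot)-e_p\|_r\le \frac{2p(p-1)}{n}\,r^p,\qquad p\ge 2,
\]
where $\|\cdot\|_r$ is the sup-norm on $|z|\le r$; summing against $|c_p|$ gives exactly $C_r(f)/n$, finite because $r<R$.

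To prove the monomial estimate I would dichotomize on $p$. When $p(p-1)\ge n$ the bound is free: by (i), $|G_n^\alpha(e_p;z)-z^p|\le|G_n^\alpha(e_p;z)|+r^p\le 2r^p\le\frac{2p(p-1)}{n}r^p$. When $p(p-1)<n$ I run an induction based on Proposition \ref{p1}. Writing $E_{n,p}(z)=G_n^\alpha(e_p;z)-z^p$ (a polynomial of degree $\le p$, with $E_{n,0}=E_{n,1}=0$) and substituting $G_n^\alpha(e_p;z)=z^p+E_{n,p}(z)$ into the recurrence, the explicit monomial part collapses to
\[
E_{n,p+1}(z)=\frac{z(1-z)}{n+p}E_{n,p}'(z)+\frac{nz+p}{n+p}E_{n,p}(z)+\frac{2pz^p(1-z)}{n+p}+R_{n,p}(z),
\]
with $R_{n,p}(z)=\frac{1}{n+p}\big(-zS_1(e_p;z)+(1-z)S_2(e_p;z)\big)$. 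Using Bernstein's inequality $\|E_{n,p}'\|_r\le\frac{p}{r}\|E_{n,p}\|_r$ on the disk together with $|z(1-z)|\le 2r^2$, $|nz+p|\le nr+p$ and $|z^p(1-z)|\le 2r^{p+1}$, the recurrence yields the scalar inequality $a_{p+1}\le\big(r+\tfrac{p(r+1)}{n+p}\big)a_p+\tfrac{4p}{n+p}r^{p+1}+\|R_{n,p}\|_r$ for $a_p:=\|E_{n,p}\|_r$; dividing by $r^{p+1}$ and using $r\ge 1$ gives $a_{p+1}/r^{p+1}\le(1+\tfrac{2p}{n})\,a_p/r^p+O(\tfrac{p}{n})$, which in the regime $p^2<n$ solves to $a_p=O(\tfrac{p^2}{n})r^p$, and a careful tracking of constants sharpens this to the claimed $\frac{2p(p-1)}{n}r^p$.

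The one genuinely delicate point is the remainder $R_{n,p}$. Bounding $S_1$ and $S_2$ separately is too lossy: each is only $O(r^{p+1})$, so term by term one gets $\|R_{n,p}\|_r=O(r^{p+2}/n)$, a power of $r$ too many to match the target. The resolution is that $S_1$ and $S_2$ must be combined before estimating: writing both through the identity $\int_0^1 p_{n-2,k-1}(t)t^p\,dt=\frac{(n-2)!}{(n-1+p)!}E_p(k)$ of Lemma \ref{l2} and reindexing, I obtain
\[
-zS_1(e_p;z)+(1-z)S_2(e_p;z)=(1-\alpha)\frac{(n-1)!}{(n-1+p)!}\,z(1-z)\sum_{j}p_{n-2,j}(z)\big(E_p(j+2)-E_p(j)\big).
\]
Since $E_p(j+2)-E_p(j)$ is a polynomial of degree $p-1$ in $j$ with nonnegative forward differences, the same nonnegative-coefficient device as in (i) bounds the inner sum by its value at $z=r$, which after normalization is $O(p/n)\,r^{p-1}$; hence $\|R_{n,p}\|_r=O\big(\tfrac{p}{n^2}\big)r^{p+1}$, comfortably within budget and carrying the correct power of $r$. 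Recognizing and exploiting this $E_p(j+2)-E_p(j)$ cancellation is the main obstacle; once it is in place the induction closes and the summation over $p$ finishes the proof.
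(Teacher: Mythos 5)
Your part (i) is exactly the paper's argument (nonnegative coefficients from Lemma \ref{l2}(ii), normalization $G_n^\alpha(e_p;1)=1$ from Lemma \ref{l2}(i), and $r^s\le r^p$ for $r\ge 1$), and your reduction of part (ii) to the single\-/monomial bound $\|G_n^\alpha(e_p;\cdot)-e_p\|_r\le \frac{2p(p-1)}{n}r^p$, including the trivial treatment of large $p$, also matches the paper. Where you genuinely diverge is in proving that monomial bound for small $p$: the paper never touches Proposition \ref{p1}; it reads the coefficient of $z^p$ off Lemma \ref{l2}(ii) exactly, using $\triangle_1^pE_p(0)=p!$ and $\triangle_1^pF_p(0)=\left(1+\frac{p}{n-1}\right)p!$, rewrites that leading coefficient as a product $\prod_{j=1}^{p-1}a_j$ with $0\le a_j\le1$, observes that the remaining (nonnegative) coefficients sum to exactly $1$ minus the leading one, and applies $1-\prod_j a_j\le\sum_j(1-a_j)$; the factor $2$ and the $p(p-1)/n$ drop out with no accumulation of losses. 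Your alternative, an induction on the recurrence of Proposition \ref{p1}, is partly sound: in particular your cancellation identity $-zS_1(e_p;z)+(1-z)S_2(e_p;z)=(1-\alpha)\frac{(n-1)!}{(n-1+p)!}z(1-z)\sum_jp_{n-2,j}(z)\bigl(E_p(j+2)-E_p(j)\bigr)$ is correct, $E_p(j+2)-E_p(j)=p(2j+p+1)\,(j+2)(j+3)\cdots(j+p-1)$ does have nonnegative differences, and $\|R_{n,p}\|_r=O(p/n^2)\,r^{p+1}$ follows; you are also right that bounding $S_1,S_2$ separately would be fatally lossy.

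The genuine gap is your final claim that ``careful tracking of constants'' sharpens the induction to the stated $\frac{2p(p-1)}{n}r^p$. It cannot, because your own recursion has no slack left for the losses it creates. Setting $b_p=\|E_{n,p}\|_r/r^p$, your inequalities give (already in the sharpest form, at $r=1$) $b_{p+1}\le\bigl(1+\frac{2p}{n+p}\bigr)b_p+\frac{4p}{n+p}+O(p/n^2)$, and the target increment is $\frac{2(p+1)p}{n}-\frac{2p(p-1)}{n}=\frac{4p}{n}$. The forcing term $\frac{4p}{n+p}$ alone consumes essentially this entire budget, leaving slack only $\frac{4p^2}{n(n+p)}$, while the Bernstein term contributes $\frac{2p}{n+p}\cdot\frac{2p(p-1)}{n}=\frac{4p^2(p-1)}{n(n+p)}$ under the inductive hypothesis; the step therefore fails for every $p\ge3$, with deficit $\frac{4p^2(p-2)}{n(n+p)}$. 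What the recursion actually yields is $b_p\lesssim\frac{2p(p-1)}{n}\,e^{p(p-1)/n}$, and on your induction regime $p(p-1)<n$ the exponential factor is bounded only by $e$, not by $1+o(1)$; moving the dichotomy threshold to $p(p-1)\ge\lambda n$ merely trades this against the trivial bound's constant $2/\lambda$, and optimizing $\lambda$ still leaves an absolute constant near $3.5$. So your route proves $|G_n^\alpha(f;z)-f(z)|\le \frac{C}{n}\sum_{p\ge2}|c_p|p(p-1)r^p$ for some absolute $C>2$ --- the correct $O(1/n)$ rate, but not Theorem \ref{t1}(ii) with the constant $C_r(f)=2\sum_{p\ge2}|c_p|p(p-1)r^p$ as stated. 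To get the constant $2$ you need the paper's exact leading-coefficient computation (or an equally exact identity), not norm estimates on the recurrence.
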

\begin{proof}
From Lemma \ref{l2} (i) and (ii), we have
\begin{eqnarray*}
G_n^\alpha(e_p;1)=\frac{(n-1)!}{(n-1+p)!} \left((1-\alpha) \sum_{s=0}^{\min\{n-1,p\}}{n-1 \choose s} \bigtriangleup_1^s F_p(0)+\alpha \sum_{s=0}^{\min\{n,p\}}{n\choose s} \bigtriangleup_1^s E_p(0)\right)=1,
\end{eqnarray*}
then
\begin{eqnarray*}
|G_n^\alpha (e_p;z)|&\leq& \frac{(n-1)!}{(n-1+p)!} \left((1-\alpha) \sum_{s=0}^{\min\{n-1,p\}}{n-1 \choose s} \bigtriangleup_1^s F_p(0)+\alpha \sum_{s=0}^{\min\{n,p\}}{n\choose s} \bigtriangleup_1^s E_p(0)\right) r^s\\
&\leq& \frac{(n-1)!}{(n-1+p)!} \left((1-\alpha) \sum_{s=0}^{\min\{n-1,p\}}{n-1 \choose s} \bigtriangleup_1^s F_p(0)+\alpha \sum_{s=0}^{\min\{n,p\}}{n\choose s} \bigtriangleup_1^s E_p(0)\right) r^p=r^p
\end{eqnarray*}
proves (i).\\
(ii) To find the upper estimate for the operators $G_n^\alpha(f;z),$ we first prove that $\displaystyle G_n^\alpha(f;z)= \sum_{p=0}^\infty c_p G_n^\alpha(e_p;z).$ For this let $f_m(z)= \displaystyle\sum_{p=0}^m c_p e_p(z), \, |z|\leq r, \, m\in \mathbb{N}.$\\ By linearity property of $G_n^\alpha(f;z),$ we can write $G_n^\alpha(f_m;z)= \displaystyle\sum_{p=0}^m c_p G_n^\alpha(e_p;z)\,\,\forall \,\, |z|\leq r.$\\ So, $\forall \,\, \,|z|\leq r, \, n\in \mathbb{N},$ it is sufficient to prove that $\displaystyle\lim_{m\rightarrow \infty}G_n^\alpha(f_m;z)= G_n^\alpha(f;z).$\\
Now, for all $|z|\leq r, \, r\geq 1,$ we have\\
$|G_n^\alpha (f_m;z)-G_n^\alpha (f;z)|$
\begin{eqnarray*}
&\leq& |f_m(0)-f(0)|*|(1-\alpha)(1-z)^{n-1}+\alpha (1-z)^n|+ |f_m(1)-f(1)|*|(1-\alpha)z^{n-1}+\alpha z^n|\\&&+(n-1)\sum_{k=1}^{n-1} q_{n,k}^\alpha (z) \int_0^1 p_{n-2,k-1}(t) *|f_m(t)-f(t)| dt\\
&\leq&  |f_m(0)-f(0)|* ((1-\alpha)(1+r)^{n-1}+\alpha (1+r)^n)+ |f_m(1)-f(1)|* ((1-\alpha)r^{n-1}+\alpha r^n) \\&&+(n-1)\sum_{k=1}^{n-1}\left\{{n-2 \choose k} (1-\alpha)r+{n-2 \choose k-2} (1-\alpha)(1+r)+{n \choose k} \alpha r(1+r)\right\} r^{k-1}(1+r)^{n-k-1}\\&&\times \int_0^1 p_{n-2,k-1}(t)* |f_m(t)-f(t)| dt\leq C_{r,n}^\alpha \|f_m-f\|_r,
\end{eqnarray*}
where
\begin{eqnarray*}
C_{r,n}^\alpha &=& \alpha (r^n+(1+r)^n)+(1-\alpha)(r^{n-1}+(1+r)^{n-1})\\&&+(n-1)\displaystyle\sum_{k=1}^{n-1}\left \{(1-\alpha){n-2 \choose k} r +(1-\alpha){n-2 \choose k-2} (1+r)+\alpha{n\choose k} r(1+r)\right\}r^{k-1}(1+r)^{n-k-1}\\&&\times \int_0^1 p_{n-2,k-1}(t) dt.
\end{eqnarray*}
Therefore, we get
\begin{eqnarray*}
|G_n^\alpha (f;z)-f(z)|\leq \sum_{p=0}^\infty |c_p| |G_n(e_p;z)-e_p(z)|= \sum_{p=2}^\infty |c_p| |G_n(e_p;z)-e_p(z)|
\end{eqnarray*}
as $G_n^\alpha (e_0;z)=e_0(z)=1, \, G_n(e_1;z)=e_1(z)=z.$\\

Now, we have two cases: (A) $2\leq p\leq n,$ \,\, (B) $p>n.$\\
Case (A): From Lemma $\ref{l2},$ we obtain
\begin{eqnarray*}
G_n^\alpha (e_p;z)-e_p(z)&=& \frac{(n-1)!}{(n-1+p)!} \left((1-\alpha) \sum_{s=0}^{p-1}{n-1 \choose s} z^s \bigtriangleup_1^s F_p(0)+\alpha \sum_{s=0}^{p-1}{n\choose s} z^s \bigtriangleup_1^s E_p(0)\right)\\&&+ \left(\frac{(n-1)!}{(n-1+p)!}\left\{(1-\alpha) {n-1 \choose p} \bigtriangleup_1^p F_p(0)+\alpha {n\choose p} \bigtriangleup_1^p E_p(0)\right\}-1\right)z^p\\
|G_n^\alpha (e_p;z)-e_p(z)|&\leq & r^p\left(1-\frac{(n-1)!}{(n-1+p)!}\left\{(1-\alpha) {n-1 \choose p} \bigtriangleup_1^p F_p(0)+\alpha {n\choose p} \bigtriangleup_1^p E_p(0)\right\}\right)\\&&+\left\vert \frac{(n-1)!}{(n-1+p)!} \left((1-\alpha) \sum_{s=0}^{p-1}{n-1 \choose s} z^s \bigtriangleup_1^s F_p(0)+\alpha \sum_{s=0}^{p-1}{n\choose s} z^s \bigtriangleup_1^s E_p(0)\right)\right\vert\\
&\leq& r^p\left(1-\frac{(n-1)!}{(n-1+p)!}\left\{(1-\alpha) {n-1 \choose p} \bigtriangleup_1^p F_p(0)+\alpha {n\choose p} \bigtriangleup_1^p E_p(0)\right\}\right)\\&&+ \frac{(n-1)!}{(n-1+p)!}\left\vert \left((1-\alpha) \sum_{s=0}^{p}{n-1 \choose s} z^s \bigtriangleup_1^s F_p(0)+\alpha \sum_{s=0}^{p}{n\choose s} z^s \bigtriangleup_1^s E_p(0)\right)\right.\\&&\left.-(1-\alpha){n-1\choose p} z^p \bigtriangleup_1^p F_p(0)-\alpha {n\choose p} z^p \bigtriangleup_1^p E_p(0)\right\vert\\
&\leq& r^p\left(1-\frac{(n-1)!}{(n-1+p)!}\left\{(1-\alpha) {n-1 \choose p} \bigtriangleup_1^p F_p(0)+\alpha {n\choose p} \bigtriangleup_1^p E_p(0)\right\}\right)\\&&+ \frac{(n-1)!}{(n-1+p)!}r^p \left\vert (1-\alpha){n-1\choose p} \bigtriangleup_1^p F_p(0)+\alpha {n\choose p} \bigtriangleup_1^p E_p(0)\right.\\&&\left.- \left((1-\alpha) \sum_{s=0}^{p}{n-1 \choose s} \bigtriangleup_1^s F_p(0)+\alpha \sum_{s=0}^{p}{n\choose s} \bigtriangleup_1^s E_p(0)\right)\right\vert\\
&\leq& 2 r^p \left(1-\frac{(n-1)!}{(n-1+p)!}\left\{(1-\alpha) {n-1 \choose p} \bigtriangleup_1^p F_p(0)+\alpha {n\choose p} \bigtriangleup_1^p E_p(0)\right\}\right).
\end{eqnarray*}
Now, by using the relationship between derivatives and differences of a function (see \cite{CTLX}, pg-251), we get \\
\noindent
$\displaystyle\frac{(n-1)!}{(n-1+p)!}\left\{(1-\alpha) {n-1 \choose p} \bigtriangleup_1^p F_p(0)+\alpha {n\choose p} \bigtriangleup_1^p E_p(0)\right\}$
\begin{eqnarray*}
&=&\frac{(n-1)!}{(n-1+p)!}\left\{(1-\alpha) {n-1 \choose p} \left(1+\frac{p}{n-1}\right) p!+\alpha {n\choose p} p!\right\}\\
&=&\frac{(n-1)!}{(n-1+p)!}\left\{(1-\alpha)  \left(\frac{(n-2)!(n-1+p)}{(n-1-p)!}\right) +\alpha \left(\frac{n!}{(n-p)!}\right)\right\}\\
&=&\prod_{j=1}^{p-1}\left((1-\alpha) \frac{n+(j-1)-p}{n+j-1}+\alpha \frac{n+j-p}{n+j}\right).
\end{eqnarray*}
Since $\displaystyle 1-\prod_{j=1}^k a_j\leq \sum_{j=1}^k (1-a_j),\,\, 0\leq a_j \leq 1,\,\, j=1,2, \dotsb k,$\\
then $\displaystyle1-\frac{(n-1)!}{(n-1+p)!}\left\{(1-\alpha) {n-1 \choose p} \bigtriangleup_1^p F_p(0)+\alpha {n\choose p} \bigtriangleup_1^p E_p(0)\right\}$
\begin{eqnarray*}
&=& 1-\prod_{j=1}^{p-1}\left((1-\alpha) \frac{n+(j-1)-p}{n+j-1}+\alpha \frac{n+j-p}{n+j}\right)\\
&\leq&  \sum_{j=1}^{p-1} 1- \left((1-\alpha)\frac{n+(j-1)-p}{n+j-1}+\alpha\frac{n+j-p}{n+j}\right)\\
&=& p \sum_{j=1}^{p-1} \left(\frac{(1-\alpha)}{n+j-1}+\frac{\alpha}{n+j}\right)\leq p(p-1) \left(\frac{(1-\alpha)}{n}+\frac{\alpha}{n+1}\right)\\
\end{eqnarray*}
$\displaystyle\Rightarrow |G_n^\alpha (e_p;z)-e_p(z)|\leq 2 p(p-1)\, r^p \left(\frac{(1-\alpha)}{n}+\frac{\alpha}{n+1}\right)\leq \frac{2 p(p-1)}{n} r^p.$\\
Case(B): By $(i)$ and $p>n,$ we get\\
\begin{eqnarray*}
|G_n^\alpha (e_p;z)-e_p(z)|\leq |G_n^\alpha (e_p;z)|+|e_p(z)|\leq 2 r^p\leq \frac{2 p(p-1)}{n} r^p.
\end{eqnarray*}
Finally, by combining both the cases (A) and (B), $\forall\,\, p, n\in \mathbb{N}$, we obtain
\begin{eqnarray*}
|G_n^\alpha (e_p;z)-e_p(z)|&\leq& \frac{2 p(p-1)}{n} r^p\\
\Rightarrow |G_n^\alpha (f;z)-f(z)|&\leq& \sum_{p=2}^\infty |c_p| |G_n^\alpha (e_p;z)-e_p(z)|\leq  \frac{2}{n} \sum_{p=2}^\infty |c_p|p(p-1)r^p,
\end{eqnarray*}
which gives the desired result.
\end{proof}

Next, we have the following qualitative asymptotic formula for complex genuine $\alpha-$Bernstein-Durrmeyer operators $G_n^\alpha(f;z).$
\subsection{Voronovskaja type result}
\begin{thm}\label{t2}
Let $1\leq r <R$ and suppose that $f(z)= \displaystyle\sum_{p=0}^\infty c_p z^p, \, \forall \,\, |z|< R.$ Then\, $\forall\, n\in \mathbb{N}, \alpha\in [0,1]$ and $|z|\leq r,$ we have
\begin{eqnarray*}
\lim_{n\rightarrow \infty} n(G_n^\alpha (f;z)-f(z))=  z(1-z) f^{\prime\prime}(z), \,\, \mbox{uniformly in}\,\,  \overline{D}_r.
\end{eqnarray*}
\begin{proof}
From Acar et al. \cite{AMM}, it is known that
\begin{eqnarray*}
\lim_{n\rightarrow\infty} \left(n\left(G_n^\alpha (f;z)-f(z)\right)-\frac{(n+1-\alpha)}{n+1}z(1-z) f^{\prime\prime}(z)\right)=0,\,\,\forall \,\, x\in [0,1].
\end{eqnarray*}
According to Vitali's theorem stated in \cite{SG}, i.e. if a sequence $\left\{f_n\right\}_{n\in \mathbb{N}}$ of analytic functions in $D_R$ is bounded in each $\overline{D}_r,$ then it is uniformly convergent in $\overline{D}_r.$\\ So, it is sufficient to show that the sequence $\left\{n\left(G_n^\alpha (f;z)-f(z)\right)-\frac{(n+1-\alpha)}{n+1}z(1-z) f^{\prime\prime}(z)\right\}_{n\in \mathbb{N}}$ is bounded in each $\overline{D}_r.$\\
 By using Theorem $\ref{t1},$ we get
\begin{eqnarray*}
\left\vert n\left(G_n^\alpha (f;z)-f(z)\right)-\frac{(n+1-\alpha)}{n+1}z(1-z) f^{\prime\prime}(z)\right\vert\leq  C_r(f)+\frac{(n+1-\alpha)}{n+1}r(1+r) \left\Vert f^{\prime\prime}\right\Vert_r \leq  C_r(f)+r(1+r) \left\Vert f^{\prime\prime}\right\Vert_r,
 \end{eqnarray*}
for all $z\in \overline{D}_r,\,1\leq r < R$ and $\alpha\in [0,1],$ where $C_r(f)$ is the same constant as defined in Theorem $\ref{t1},$ which proves the theorem.
\end{proof}
\end{thm}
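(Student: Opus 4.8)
The plan is to reduce the claim to the known pointwise asymptotic on the real interval together with Vitali's convergence theorem, so that the only genuine work is a uniform boundedness estimate on each closed disk. First I would invoke the real-variable Voronovskaja formula of Acar et al.\ \cite{AMM}, which asserts that
$$\lim_{n\to\infty}\left(n\bigl(G_n^\alpha(f;z)-f(z)\bigr)-\frac{n+1-\alpha}{n+1}z(1-z)f''(z)\right)=0$$
for every $z=x\in[0,1]$. Since $[0,1]$ has accumulation points inside $D_R$, pointwise convergence there is already enough to feed into Vitali, provided local uniform boundedness is in hand.

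Second, I would set $g_n(z):=n(G_n^\alpha(f;z)-f(z))-\frac{n+1-\alpha}{n+1}z(1-z)f''(z)$ and observe that each $g_n$ is analytic in $D_R$. The goal is to show that $\{g_n\}$ is bounded on each $\overline{D}_r$ by a constant independent of $n$ (and of $\alpha$). This is exactly where Theorem \ref{t1}(ii) does the heavy lifting: it gives $|G_n^\alpha(f;z)-f(z)|\le C_r(f)/n$, hence $n|G_n^\alpha(f;z)-f(z)|\le C_r(f)$ uniformly in $n$. For the remaining term I would use $|z(1-z)|\le r(1+r)$, $|f''(z)|\le \|f''\|_r$ on $\overline{D}_r$, and $\tfrac{n+1-\alpha}{n+1}\le 1$, yielding
$$|g_n(z)|\le C_r(f)+r(1+r)\,\|f''\|_r$$
for all $z\in\overline{D}_r$, $n\in\mathbb{N}$, $\alpha\in[0,1]$.

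Third, with uniform boundedness established and pointwise convergence to $0$ on $[0,1]$ known, Vitali's theorem (as quoted from \cite{SG}) upgrades $g_n\to 0$ to uniform convergence on $\overline{D}_r$. Since $\frac{n+1-\alpha}{n+1}z(1-z)f''(z)\to z(1-z)f''(z)$ uniformly on $\overline{D}_r$, adding this correction back gives $n(G_n^\alpha(f;z)-f(z))\to z(1-z)f''(z)$ uniformly on $\overline{D}_r$, as required.

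I do not expect a serious obstacle: the structure is the standard ``pointwise asymptotic $+$ Vitali'' argument for overconvergence. The one point that must be checked carefully is that the bounding constant $C_r(f)+r(1+r)\|f''\|_r$ is genuinely independent of both $n$ and $\alpha$, which is why Theorem \ref{t1}(ii)---whose constant $C_r(f)$ does not involve $\alpha$---is the essential input. Should one prefer a self-contained proof avoiding the cited real-variable formula, the alternative would be to expand $f$ in its Taylor series, derive a Voronovskaja estimate for each monomial $e_p$ via the recurrence of Proposition \ref{p1}, and sum; but that route is considerably more laborious, and the Vitali approach is cleaner.
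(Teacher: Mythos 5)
Your proposal is correct and follows essentially the same route as the paper: the real-variable Voronovskaja formula of Acar et al.\ gives pointwise convergence on $[0,1]$, Theorem \ref{t1}(ii) together with the crude bounds $|z(1-z)|\le r(1+r)$, $\frac{n+1-\alpha}{n+1}\le 1$ gives the $n$- and $\alpha$-independent bound $C_r(f)+r(1+r)\|f''\|_r$ on $\overline{D}_r$, and Vitali's theorem upgrades this to uniform convergence. If anything, your write-up is slightly more careful than the paper's, since you explicitly note that $[0,1]$ has accumulation points in $D_R$ and you spell out the final step of replacing $\frac{n+1-\alpha}{n+1}z(1-z)f''(z)$ by its limit $z(1-z)f''(z)$.
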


Finally, we find the exact order of approximation by using the above asymptotic result.
\subsection{Exact order of approximation}
\begin{thm}
Let $\alpha\in [0,1], f:D_R \rightarrow \mathbb{C}$ is analytic in $D_R, R>1$ with $f(z)=\displaystyle\sum_{p=0}^\infty c_p z^p.$ If $f$ is not a polynomial of degree $\leq 1,$ then for all $1\leq r <R,$ we have
\begin{eqnarray*}
\left\Vert G_n^\alpha(f)-f\right\Vert_r \thicksim \frac{1}{n},\, n\in \mathbb{N}
\end{eqnarray*}
in $\overline{D}_r,$ where the constants in the equivalence depend on $f$ and $r.$
\end{thm}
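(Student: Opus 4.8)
The strategy is to pair the upper estimate of Theorem \ref{t1}(ii), which already gives $\|G_n^\alpha(f)-f\|_r\le C_r(f)/n$, with a matching lower estimate $\|G_n^\alpha(f)-f\|_r\ge c_r(f)/n$ for some $c_r(f)>0$ depending on $f$ and $r$; the two together furnish the equivalence $\thicksim 1/n$. The upper bound being in hand, the whole task is the lower bound, and the point I would stress is that the \emph{qualitative} Voronovskaja result of Theorem \ref{t2} is already strong enough --- no quantitative (rate) refinement is needed.

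For large $n$, I would put $F(z):=z(1-z)f''(z)$ and first observe $\|F\|_r>0$: since $f$ is not a polynomial of degree $\le 1$ we have $f''\not\equiv 0$, so $F$ is a nonzero analytic function on $D_R$ and, by the identity theorem, cannot vanish throughout the disk $\overline{D}_r$ of positive radius $r\ge 1$. Theorem \ref{t2} states that $n(G_n^\alpha(f)-f)\to F$ uniformly on $\overline{D}_r$, that is $\|\,n(G_n^\alpha(f)-f)-F\,\|_r\to 0$. Hence there is $N=N(f,r)$ with $\|\,n(G_n^\alpha(f)-f)-F\,\|_r\le\tfrac12\|F\|_r$ for all $n\ge N$, and the reverse triangle inequality yields
$$n\,\|G_n^\alpha(f)-f\|_r=\|\,n(G_n^\alpha(f)-f)\,\|_r\ge\|F\|_r-\tfrac12\|F\|_r=\tfrac12\|F\|_r,$$
so $\|G_n^\alpha(f)-f\|_r\ge \dfrac{\|F\|_r}{2n}$ for every $n\ge N$.

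It remains to handle the finitely many indices $1\le n<N$, where I would prove $\|G_n^\alpha(f)-f\|_r>0$ directly. By \eqref{n1}, $G_n^\alpha(f;\cdot)$ is a polynomial of degree $\le n$, so $\|G_n^\alpha(f)-f\|_r=0$ would force $G_n^\alpha(f)=f$ on all of $D_R$ (identity theorem), making $f$ a polynomial, say $f=\sum_{p=0}^{d}c_pe_p$ with $c_d\ne 0$ and $2\le d\le n$. Lemma \ref{l2} gives $\deg G_n^\alpha(e_p)\le p$, with the coefficient of $z^d$ in $G_n^\alpha(e_d)$ equal to $\theta_d:=\prod_{j=1}^{d-1}\big((1-\alpha)\tfrac{n+j-1-d}{n+j-1}+\alpha\tfrac{n+j-d}{n+j}\big)<1$ --- the very factor isolated in the proof of Theorem \ref{t1}. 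Comparing the coefficients of $z^d$ in $f=\sum_{p=0}^{d}c_pG_n^\alpha(e_p)$ then forces $c_d=c_d\theta_d$, i.e.\ $c_d=0$, a contradiction. Thus $\|G_n^\alpha(f)-f\|_r>0$ for each such $n$, and setting $c_r(f):=\min\big\{\tfrac12\|F\|_r,\ \min_{1\le n<N}n\|G_n^\alpha(f)-f\|_r\big\}>0$ gives $\|G_n^\alpha(f)-f\|_r\ge c_r(f)/n$ for all $n\in\mathbb{N}$. Combined with Theorem \ref{t1}(ii) this produces $\|G_n^\alpha(f)-f\|_r\thicksim 1/n$.

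The decisive point --- and the only place the hypothesis on $f$ enters --- is the positivity $\|F\|_r>0$, which keeps the principal Voronovskaja term from degenerating; everything downstream rests on it. The remaining steps are routine: passing from the uniform limit of Theorem \ref{t2} to a lower bound is immediate via the reverse triangle inequality, and the finite-index step, mildly fiddly because it must exclude accidental fixed points of $G_n^\alpha$, is closed off by the strict contraction $\theta_d<1$ of the leading coefficient already computed in Theorem \ref{t1}. I therefore do not anticipate needing a quantitative Voronovskaja estimate; the uniform-convergence form of Theorem \ref{t2} is exactly what drives the argument.
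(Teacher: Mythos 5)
Your proposal is correct, and at the top level it follows the same route as the paper: upper bound from Theorem \ref{t1}(ii), lower bound from the Voronovskaja result of Theorem \ref{t2}. The difference is in completeness. The paper's entire proof is the one-line assertion that Theorem \ref{t2} yields constants $0<C_1,C_2<\infty$ with $C_1\le n\,\|G_n^\alpha(f)-f\|_r\le C_2$; it never verifies the two points on which that assertion actually rests, and you prove both. First, you establish $\|e_1(1-e_1)f''\|_r>0$ from the hypothesis that $f$ is not a polynomial of degree $\le 1$ (via the identity theorem) --- this is the only place the hypothesis enters, and the paper never explicitly invokes it. Second, and more substantively, convergence of $n\,\|G_n^\alpha(f)-f\|_r$ to a positive limit only gives a positive lower bound for $n\ge N$; for the finitely many indices $n<N$ one must still rule out $\|G_n^\alpha(f)-f\|_r=0$, i.e.\ rule out $G_n^\alpha(f)=f$. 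The paper is silent on this, whereas you close it with the leading-coefficient argument: a fixed point would be a polynomial of degree $d$ with $2\le d\le n$, and comparing $z^d$-coefficients gives $c_d=\theta_d c_d$ with $\theta_d<1$, forcing $c_d=0$. One pedantic caveat there: for $d=n$ the product formula for $\theta_d$ quoted from the proof of Theorem \ref{t1} degenerates (it was derived using ${n-1 \choose p}$, which vanishes for $p=n$, and the product identity actually fails in that case for $\alpha<1$); but reading the coefficient directly from Lemma \ref{l2}(ii) it equals $\alpha\,(n-1)!\,n!/(2n-1)!<1$, so your contradiction survives unchanged. In short: same strategy as the paper, but your write-up supplies the nontrivial steps --- positivity of the limit function and the exclusion of accidental fixed points for small $n$ --- that the paper's proof takes for granted.
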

\begin{proof}
By using Theorem $\ref{t2},$ there exist constants $0<C_1, C_2<\infty$ independent of $n$ such that
\begin{eqnarray*}
C_1\leq n \left\Vert G_n^\alpha(f)-f\right\Vert_r\leq C_2,
\Rightarrow \,\, \frac{C_1}{n}\leq  \left\Vert G_n^\alpha(f)-f\right\Vert_r\leq \frac{C_2}{n} \,\,\mbox{holds in}\,\, \overline{D}_r.
\end{eqnarray*}
Thus, we get the required result.
\end{proof}

In the following result, we find the exact order of approximation for the derivatives of our operators $G_n^\alpha(f;z):$
\subsection{Simultaneous approximation}
\begin{thm}
Let $\alpha\in [0,1], f:D_R \rightarrow \mathbb{C}$ is analytic in $D_R, R>1$ with $f(z)=\displaystyle\sum_{p=0}^\infty c_p z^p.$ If $f$ is not a polynomial of degree $\leq \max\{1,l-1\},$ then for all $1\leq r <r_1<R$ and $l\in \mathbb{N},$ we have
\begin{eqnarray*}
\left\Vert G_n^{\alpha (l)}(f)-f^{(l)}\right\Vert_r \thicksim \frac{1}{n},\, n\in \mathbb{N}
\end{eqnarray*}
in $\overline{D}_r,$ where the constants in the equivalence depend on $f, r, r_1$ and $l.$
\end{thm}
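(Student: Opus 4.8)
The plan is to prove matching two-sided estimates, since $\|G_n^{\alpha(l)}(f)-f^{(l)}\|_r\thicksim\frac1n$ means precisely that there exist constants $0<C_1\le C_2<\infty$ depending on $f,r,r_1,l$ with $\frac{C_1}{n}\le\|G_n^{\alpha(l)}(f)-f^{(l)}\|_r\le\frac{C_2}{n}$ for all $n\in\mathbb{N}$. Both $f$ and $G_n^\alpha(f)$ are analytic on $D_R$ — the latter because $\sum_{p}c_p\,G_n^\alpha(e_p;z)$ converges locally uniformly by Theorem~\ref{t1}(i) — so I may differentiate and apply the Cauchy integral formula on circles inside $D_R$. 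For the upper estimate, fix the circle $\gamma=\{v:|v|=r_1\}$ with $r<r_1<R$; then for $|z|\le r$
\[
G_n^{\alpha(l)}(f;z)-f^{(l)}(z)=\frac{l!}{2\pi i}\int_\gamma\frac{G_n^\alpha(f;v)-f(v)}{(v-z)^{l+1}}\,dv,
\]
and since $|v-z|\ge r_1-r$ on $\gamma$, Theorem~\ref{t1}(ii) (applied with radius $r_1$) gives
\[
\|G_n^{\alpha(l)}(f)-f^{(l)}\|_r\le\frac{l!\,r_1}{(r_1-r)^{l+1}}\cdot\frac{C_{r_1}(f)}{n}=:\frac{C_2}{n}.
\]

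For the lower estimate I would use the Voronovskaja decomposition. Set $H_n(z):=n\big(G_n^\alpha(f;z)-f(z)\big)-z(1-z)f''(z)$, so that $G_n^\alpha(f;z)-f(z)=\frac1n\big(z(1-z)f''(z)+H_n(z)\big)$. Theorem~\ref{t2} gives $\|H_n\|_{r_1}\to0$, and applying the same Cauchy estimate to the analytic function $H_n$ yields $\|H_n^{(l)}\|_r\le\frac{l!\,r_1}{(r_1-r)^{l+1}}\|H_n\|_{r_1}\to0$. Differentiating the decomposition $l$ times and taking the norm on $\overline{D}_r$,
\[
n\,\|G_n^{\alpha(l)}(f)-f^{(l)}\|_r\ge\big\|[z(1-z)f''(z)]^{(l)}\big\|_r-\|H_n^{(l)}\|_r.
\]
The hypothesis that $f$ is not a polynomial of degree $\le\max\{1,l-1\}$ is precisely the statement that $z(1-z)f''(z)$ is not a polynomial of degree $\le l-1$, equivalently $[z(1-z)f''(z)]^{(l)}\not\equiv0$, so that $M:=\big\|[z(1-z)f''(z)]^{(l)}\big\|_r>0$. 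Hence there is $n_0$ with $\|H_n^{(l)}\|_r\le M/2$ for $n\ge n_0$, giving $\|G_n^{\alpha(l)}(f)-f^{(l)}\|_r\ge\frac{M}{2n}$ there.

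It remains only to cover the finitely many indices $1\le n<n_0$. For each such $n$ one has $G_n^{\alpha(l)}(f)-f^{(l)}\not\equiv0$: otherwise $G_n^\alpha(f)-f$ would be a polynomial of degree $\le l-1$, which is impossible under the hypothesis (if $f$ is not a polynomial the tail coefficients $-c_p$ with $p>n$ survive, while if $f$ is a polynomial of degree $d\ge\max\{2,l\}$ the coefficient of $z^d$ in $G_n^\alpha(f)-f$ is nonzero). Thus $\|G_n^{\alpha(l)}(f)-f^{(l)}\|_r>0$ for every $n$, and setting $C_1:=\min\big\{\tfrac{M}{2},\ \min_{1\le n<n_0}n\|G_n^{\alpha(l)}(f)-f^{(l)}\|_r\big\}>0$ gives the lower bound $\|G_n^{\alpha(l)}(f)-f^{(l)}\|_r\ge\frac{C_1}{n}$ for all $n$; combined with the upper bound this is the asserted equivalence in $\overline{D}_r$.

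The main obstacle is the lower estimate, and specifically the passage from the scalar limit in Theorem~\ref{t2} to control of the $l$-th derivative of the error. The key device is the Cauchy inequality on the annular gap $r<r_1$, which converts $\|H_n\|_{r_1}\to0$ into $\|H_n^{(l)}\|_r\to0$ and thereby makes the remainder negligible against the leading term $[z(1-z)f''(z)]^{(l)}/n$; this is exactly why the statement needs the auxiliary radius $r_1$. The only other point requiring care is confirming $M>0$, for which the degree hypothesis on $f$ is tailored precisely to guarantee $[z(1-z)f''(z)]^{(l)}\not\equiv0$.
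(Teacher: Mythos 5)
Your proof is correct and follows essentially the same route as the paper: Cauchy's integral formula on the circle of radius $r_1$ together with Theorem \ref{t1}(ii) for the upper bound, and the Voronovskaja result (Theorem \ref{t2}) transferred to $l$-th derivatives via the same Cauchy estimate for the lower bound. If anything, your write-up is more complete than the paper's, which asserts the uniform limit $n(G_n^{\alpha(l)}(f;z)-f^{(l)}(z))\to (z(1-z)f''(z))^{(l)}$ and the existence of the constants $M_1, M_2$ without spelling out the remainder estimate $\|H_n^{(l)}\|_r\to 0$, the role of the degree hypothesis in guaranteeing $\|(z(1-z)f'')^{(l)}\|_r>0$, or the positivity of $\|G_n^{\alpha(l)}(f)-f^{(l)}\|_r$ for the finitely many initial indices $n$.
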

\begin{proof}
Denoting by $\Gamma$ the circle of radius $r_1>r$ and centre $0,$ by Cauchy's formula, it follows that $\forall \,\, |z|\leq r,\mu\in \Gamma$ with $|\mu-z|\geq r_1-r,$ and $n\in \mathbb{N},$ we have
\begin{eqnarray*}
|G_n^{\alpha (l)}(f;z)-f^{(l)}(z)| &\leq& \frac{l!}{2\pi} \int_\Gamma \frac{|G_n^{\alpha}(f;\mu)-f(\mu)|}{|\mu-z|^{l+1}} |d\mu|\\
&\leq& \frac{l!}{2\pi}* \frac{2\pi r_1}{(r_1-r)^{l+1}} * \|G_n^{\alpha}(f)-f\|_{r_1}\leq \frac{C_{r_1}(f) \,l!\, r_1}{n\,(r_1-r)^{l+1}}.
\end{eqnarray*}
From Theorem $\ref{t2},$ we obtain
\begin{eqnarray*}
\lim_{n\rightarrow \infty} n(G_n^{\alpha (l)} (f;z)-f^{(l)}(z))= \left(z(1-z) f^{\prime\prime}(z)\right)^{(l)}, \,\, \mbox{uniformly in}\,\,  \overline{D}_r.
\end{eqnarray*}
So, $\exists$ constants $0<M_1, \, M_2<\infty$ independent of $n$ such that
\begin{eqnarray*}
M_1\leq n \|G_n^{\alpha (l)}-f^{(l)}\|_r \leq M_2,\,\, \Rightarrow \frac{M_1}{n}\leq \|G_n^{\alpha (l)}-f^{(l)}\|_r \leq \frac{M_2}{n},\,\, \mbox{holds in}\,\, \overline{D}_r.
\end{eqnarray*}
This completes the proof.
\end{proof}

\section{Further application}
As applications of our complex operators $(\ref{n1})$, one can study some shape preserving properties. Thus, reasoning exactly as it was done in the case of complex $\alpha-$Bernstein polynomials in \cite{NC}, one can prove that beginning with an index, the summation-integral operators $G_n^\alpha(f;z)$ in this paper approximate the analytic functions, preserving in addition, the geometric properties of star likeness, convexity, univalence and spirallikeness in a certain disk.

\section{Data availability statement}
Data sharing not applicable to this manuscript as no datasets were generated or analysed during the current study.

\end{document}